\newcommand{\tfa}{time-frequency analysis}
\newcommand{\fif}{if and only if}
\newtheorem{tm}{Theorem}
\newtheorem{lemma}[tm]{Lemma}
\newtheorem{prop}[tm]{Proposition}
\newcommand{\rem}{\noindent\textsl{REMARK:}}
  \theoremstyle{definition}
  \newtheorem{definition}{Definition}
\newcommand{\beqa}{\begin{eqnarray*}}
\newcommand{\eeqa}{\end{eqnarray*}}
\newcommand{\field}[1]{\mathbb{#1}}
\newcommand{\bN}{\field{N}}        
\newcommand{\bZ}{\field{Z}}        
\newcommand{\bT}{\field{T}}        %
  \def\cB{\mathcal{B}}
  \def\cU{\mathcal{U}}
  \def\cA{\mathcal{A}}
  \def\cC{\mathcal{C}}
  \def\cO{\mathcal{O}}
  \def\cZ{\mathcal{Z}}
\def\<{\left<}
\def\>{\right>}
\def\mv1{M_v^1}
\newcommand{\T}{\mathbb{T}}
\newcommand{\Z}{\mathbb{Z}}
\newcommand{\zn}{\Z ^n}
\newcommand{\eltw}{\ell ^1 (\zn , \theta )}
\newcommand{\twist}{\, \natural \, }
\newcommand{\ic}{inverse-closed}
\begin{document}
\begin{abstract}
We give a systematic construction of inverse-closed (Banach)
subalgebras in general higher-dimensional non-commutative tori. 
\end{abstract}

\title[Inverse-Closed  Subalgebras of
   Noncommutative Tori]{Inverse-Closed Banach Subalgebras of Higher-Dimensional
   Noncommutative Tori}
\author{Karlheinz Gr\"ochenig}
\address{Faculty of Mathematics \\
University of Vienna \\
Nordbergstrasse 15 \\
A-1090 Vienna, Austria}
\email{karlheinz.groechenig@univie.ac.at}
\author{Michael Leinert}
\address{Institut f\"ur Angewandte Mathematik, Fakult\"at f\"ur Mathematik,
Im Neuenheimer Feld 288, D-69120 Heidelberg, Germany}
\email{leinert@math.uni-heidelberg.de}

\subjclass[2000]{46L85,22E25,43A20}
\keywords{Non-commutative torus, twisted convolution, GRS condition,
   inverse-closed, spectral invariance, enveloping $C^*$-algebra}
\thanks{K.\ G.\ was
   supported in part by the  project P22746-N13  of the
Austrian Science Foundation (FWF) and  by the Marie-Curie Excellence Grant
   MEXT-CT 2004-517154}
\maketitle

\section{Introduction}

Let $\cA \subseteq \cB $ be two algebras with common identity. Then
$\cA  $ is called inverse-closed  in $\cB $, if $a\in \cA $ and $a^{-1}
\in \cB $ implies that $a^{-1} \in \cA $. This property is a
generalization of Wiener's Lemma for absolutely convergent Fourier
series and occurs abundantly  in many branches of  mathematical
analysis. The range of applications covers  numerical analysis,
pseudodifferential operators,  frame theory, and 
  last not least  non-commutative tori. See~\cite{Gr10} for a survey
  of many versions of Wiener's Lemma and applications of
  inverse-closedness.

The main result concerning inverse-closed subalgebras of
non-commutative tori is the density theorem. It  states that the $K$-groups of
a  non-commutative torus and of all its  inverse-closed subalgebras are
isomorphic. Similarly the  stable rank of an inverse-closed
subalgebra coincides with the   stable rank of the large
algebra~\cite{Bad98}.  Usually  the existence of an inverse-closed subalgebra
is taken for granted and is  the starting point for the
theory. Also, mostly   Fr\'echet subalgebras  are considered rather than
Banach subalgebras (because Fr\'echet algebras model ``smooth'' 
noncommutative tori).

In this paper we investigate a systematic construction of Banach
subalgebras of  non-commutative tori in higher dimensions. We will 
characterize
all inverse-closed Banach subalgebras of the form $\ell ^1_v$, where
$v$ is a weight function on $\bZ ^n$.   By choosing
weights of subexponential growth, we even  construct a Banach
subalgebra  that is contained in the ordinary smooth noncommutative
torus. For certain noncommutative tori with an even number of
generators these results were already obtained in~\cite{GL04}.
This work is motivated by a question of   N.~C.~Phillips who 
asked us  whether the results in\cite{GL04} also  hold for arbitrary
non-commutative tori 
in higher dimensions.

Our methods are drawn from abstract harmonic analysis, in particular
the investigation of projective representations and twisted
convolution algebras in the school of Leptin and Ludwig.  

Let us mention that in some areas an  inverse-closed subalgebra is 
also called a spectral subalgebra,  a
local subalgebra, or a
full algebra.  If
$\cA $ is inverse-closed in $\cB $,  then $\cA $ is called spectrally
invariant in $\cB $ or (under standard conditions)  invariant under holomorphic
calculus; $(\cA , \cB )$ is called a Wiener pair.


\section{Higher-Dimensional Non-Commutative Tori}

We first give a description of non-commutative tori in higher
dimensions and explain the link to harmonic analysis.

Let $\T$ denote the unit circle. Let $U_{1}, \ldots , U_{n}$ be
unitary symbols  satisfying the commutation relations
$$
U_{j} U_{k} = \theta_{jk} U_{k} U_{j} \, ,
$$ where
$\theta_{jk} \in \T$.
Since $U_j U_k = \theta _{jk} U_kU_j = \theta _{jk} \theta _{kj}
U_jU_k$,  we have $\theta _{kj} = \overline{\theta _{jk}}$ and thus
the matrix $\theta  = (\theta _{jk})_{j,k= 1, \dots ,n}$ is hermitean.

The non-commutative torus $C^*(\theta )$ is the universal
$C^*$-algebra generated by the unitaries $U_j, j=1, \dots ,n$. To
obtain a concrete and workable representation, we interpret
$C^*(\theta )$ as a twisted group $C^*$-algebra of $\Z ^n$.

Using multi-index notation with  $U^{l} =
U_{1}^{l_{1}} \cdots U_{n}^{l_{n}}$ for $l\in \Z^{n}$, we obtain
\begin{equation}\label{eq:1}
U^{l}U^{m} = \sigma (l,m)U^{l+m}�\, \mbox{ for } l,m\in \Z^{n},
\end{equation}
where $\sigma (l,m)\in \T$. In fact,  repeated application of the
commutation rules yields the expression
\begin{equation}
   \label{eq:1a}
\sigma (l,m) = \left(\prod_{j=1}^{n-1} 
\theta_{n,j}^{m_{j}}\right)^{l_{n}}\left(\prod_{j=1}^{n-2}\theta_{n-1,j}^{m_{j}})\right)^{l_{n-1}}   
\cdots \left(\prod_{j=1}^{1} \theta_{2,j}^{m_{j}}\right)^{l_{2}} =
\prod _{1\leq j<k \leq n} \theta _{k,j}^{l_k m_j}\, .
\end{equation}
Since $U^{0} = U_1^0 \dots U_n ^0 = \mathrm{I}$, \eqref{eq:1} implies
$\sigma (0,m) = \sigma (m,0) = 1$ which is consistent with  \eqref{eq:1a}. We also have $\sigma (-l,m) = \sigma (l,-m)
= \overline{\sigma (l,m)}$. 

Since we require the multiplication to be associative, we have
\begin{equation}\label{eq:2}
\sigma(l,m) \sigma(l+m,p) = \sigma(l,m+p)\sigma(m,p) \, \mbox{ for } 
l,m,p\in\Z^{n}.
\end{equation}
  For $f$ and  $g\in \ell ^{1}(\Z^{n})$  we define the twisted
  convolution $f \natural_{\theta}  g$ or simply $f\natural g$ by
\begin{equation}
   \label{eq:c3}
f \, \natural _\theta \, g(x) =
\sum_{y\in\Z^{n} } f(y) g(x-y) \, \sigma (y,x-y) \qquad x \in
\Z ^n \, .
\end{equation} 
The involution $f \mapsto f^{\ast}$  is defined  by $f^{\ast}(x) =
\overline{\sigma(x,-x)}\, \overline{f(-x)}$ for $x\in \Z^{n}$.
For the special case of ``Dirac'' functions  $\delta _y = \chi_{\{y\}}$ we have 
\begin{equation} \label{eq:cx}
\delta _y \twist \delta _z = \sigma (y,z) \delta _{y+z} \quad \text{
  and } \quad \delta _y^* = \overline{\sigma (-y,y)} \delta _{-y}
\qquad y,z \in \mathbb{Z}^n \, .
\end{equation}
We also note that $\delta _y \twist \delta _y^* =  \sigma (y,-y)
\overline{\sigma   (-y,y)} \delta _0 = \delta _0$ and 
$\delta _y ^* \twist \delta _y  = \overline{\sigma
  (-y,y)} \sigma (-y,y) \delta _0 = \delta _0$, so $\delta _y$
is unitary for every $y\in \mathbb{Z}^n$. 

 Then  $(\ell
^{1}(\Z^{n}), \natural_\theta , ^\ast )$ is a Banach
$\ast$-algebra, which we denote by $\ell ^1(\Z ^n , \theta )$. This
fact can be checked directly,   but  also follows from the reasoning below.

Following~\cite{sch26} and \cite{mackey58}, we  define a central extension
$G$ of $\T$ by $\Z^{n}$ as follows. Let  $G=\{(x, \xi) : x\in \Z^{n}, \xi
\in \T\}$ with multiplication $(x,\xi)(y,\eta) = (x+y,
 \sigma(x,y)\xi\eta )$. Then $G$ is a nilpotent  group with
neutral element $e 
=(0,1)$ and inverse $(x,\xi)^{-1}=(-x, \overline{\sigma(x,-x) \xi }) $.
The Haar measure on $G$ is $\int _G f(a) \, da = \sum _{x\in \Z ^n}
\int _\bT f(x,\xi ) \, d\xi $, and the group convolution $ \star $ on 
$G$ is defined
with respect to this  measure.

  For $f\in \ell ^{1}(\Z^{n})$ we define $f^{\circ}\in
L^{1}(G)$ by $f^{\circ}(x,\xi ) = f(x)\overline{\xi }$. This extension satisfies
the following properties.

\begin{lemma} \label{l1}
   The mapping $\circ : \ell ^1(\Z ^n )  \to  L^1(G)$ is an isometric
   $*$-homomorphism from $\ell ^1(\bZ ^n, \theta )$ into $L^1(G)$.
\end{lemma}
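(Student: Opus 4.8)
The plan is to verify three things about the map $\circ : \ell^1(\Z^n,\theta) \to L^1(G)$: that it is isometric, that it is multiplicative (a homomorphism of the convolution structures), and that it intertwines the involutions. The definition $f^\circ(x,\xi) = f(x)\overline{\xi}$ sends a function on $\Z^n$ to a function on $G$ that picks out a single Fourier mode in the $\T$-variable $\xi$, so all the computations amount to integrating out the circle variable using $\int_\T \xi^k \, d\xi = \delta_{k,0}$.

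First I would check isometry. Using the Haar measure $\int_G f(a)\,da = \sum_{x\in\Z^n}\int_\T f(x,\xi)\,d\xi$, I compute $\norm{f^\circ}_{L^1(G)} = \sum_{x\in\Z^n}\int_\T |f(x)\overline{\xi}|\,d\xi = \sum_{x\in\Z^n}|f(x)|\int_\T 1\,d\xi = \norm{f}_{\ell^1}$, since $|\overline{\xi}| = 1$ on $\T$ and the circle has total mass $1$. Next I would verify the $*$-property. Starting from $f^\circ(x,\xi) = f(x)\overline{\xi}$, the group inverse is $(x,\xi)^{-1} = (-x,\overline{\sigma(x,-x)\xi})$, and the $L^1(G)$-involution $g^\star(a) = \overline{g(a^{-1})}$ should be shown to match $(f^*)^\circ$, where $f^*(x) = \overline{\sigma(x,-x)}\,\overline{f(-x)}$ as defined in the excerpt. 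This is a direct substitution: $(f^\circ)^\star(x,\xi) = \overline{f^\circ((x,\xi)^{-1})}$, and unwinding the conjugations on $\sigma$ and $\xi$ should reproduce $(f^*)^\circ(x,\xi) = \overline{\sigma(x,-x)}\,\overline{f(-x)}\,\overline{\xi}$.

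The main step, and the one I expect to be the principal obstacle, is multiplicativity: showing $(f\natural_\theta g)^\circ = f^\circ \star g^\circ$. Here I would expand the group convolution
\begin{equation*}
(f^\circ \star g^\circ)(x,\xi) = \sum_{y\in\Z^n}\int_\T f^\circ(y,\eta)\, g^\circ\big((y,\eta)^{-1}(x,\xi)\big)\, d\eta \, ,
\end{equation*}
compute the product $(y,\eta)^{-1}(x,\xi)$ using the group law, and substitute the definitions of $f^\circ$ and $g^\circ$. The delicate bookkeeping lies in tracking the cocycle factors $\sigma$ that arise from the group multiplication and from the inverse, together with the powers of $\eta$ and $\xi$; after collecting these, the integral $\int_\T \eta^k\,d\eta$ forces a single surviving value of $\eta$, and what remains should be exactly $\overline{\xi}\sum_{y} f(y)g(x-y)\sigma(y,x-y)$, i.e.\ $(f\natural_\theta g)^\circ(x,\xi)$.

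Once these three identities hold, the map is an isometric $*$-homomorphism by definition, and injectivity follows immediately from isometry. The cocycle identity~\eqref{eq:2} and the relations $\sigma(0,m)=\sigma(m,0)=1$, $\sigma(-l,m)=\overline{\sigma(l,m)}$ recorded in the excerpt are precisely the algebraic facts needed to make the $\sigma$-factors cancel correctly in both the multiplicativity and involution computations, so no additional input should be required beyond careful substitution.
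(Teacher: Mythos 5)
Your proposal is correct and follows essentially the same route as the paper: verify isometry via the Haar measure, check the involution by direct substitution using the formula for $(x,\xi)^{-1}$, and establish multiplicativity by expanding the group convolution and invoking the cocycle identity~\eqref{eq:2} (with $(l,m,p)=(y,-y,x)$) to reduce the resulting $\sigma$-factors to $\sigma(y,x-y)$. The paper's proof is exactly this computation carried out in full, so no further ideas are needed.
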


\begin{proof}
  We have
$$\|f^{\circ}\|_{1}
= \int_{G} |f^{\circ}(a) | da = \sum_{x\in\Z^{n}} \int_{\T}
|f(x)\overline{\xi }
|\, d\xi = \sum_{x\in\Z^{n}} |f(x)| = \|f\|_1 \, ,
$$ so $f \mapsto f^{\circ}$ is an  isometry. This map is compatible
with the involution, since
\begin{eqnarray*}
(f^{\ast})^{\circ}(x,\xi)&=& f^{\ast}(x)\overline{\xi } 
= \overline{\sigma(x,-x) f(-x) \xi } \\
&=& \overline{f^{\circ}(-x, \overline{\sigma  (x,-x) \xi } )}= \overline{f^{\circ}((x,\xi )^{-1})}= (f^{\circ})^{\star} (x,\xi)\, .
\end{eqnarray*}
For the homomorphism property we first write
\begin{eqnarray*}
  (f \natural g)^{\circ}(x,\xi ) &=& (f\natural g)(x)\,  \overline{\xi
    }=\\
&=& \sum_{y\in \bZ ^n} f(y) g(x-y) \sigma(y,x - y) \overline{\xi } \, .
\end{eqnarray*}
On the other hand, using that 
$$
(y,\eta )^{-1} (x,\xi ) = (-y, \overline{\sigma (y,-y) \eta}) (x,\xi )
= (x-y, \xi \overline{\eta} \overline{\sigma (y,-y) }\sigma (-y,x)) \,
,
$$
we obtain 
\begin{eqnarray*}
   (f^{\circ} \star g^{\circ} )(x, \xi ) & = &  \sum_{y}\int_{\T} f^{\circ}(y,\eta 
) g^{\circ}\big((y,\eta )^{-1} (x,\xi )\big)\, d\eta\\ 
&=& \sum_{y}\int_{\T} f(y) \overline{\eta} g(x-y) \overline{\xi} \eta
\sigma (y,-y)\overline{ \sigma (-y,x)} \, d\eta \, .
\end{eqnarray*}
Using (\ref{eq:2}) with $(l,m,p) = (y,-y,x)$ and $\sigma(0,x)=1$, we
have
$$
\sigma (y,-y) \sigma (0,x) = \sigma (y,-y+x) \sigma (-y,x)
$$
or $\sigma (y,-y)\overline{ \sigma (-y,x)} = \sigma (y,x-y)$. Comparing the formulas, we see that
$$
(f \, \natural \, g)^{\circ} = f^\circ \star g^{\circ} \, .
$$
\end{proof}

We therefore may think of $\ell ^{1}(\Z^{n}, \theta)$ as a
closed $\ast$-subalgebra of $L^{1}(G)$. In particular,  it is a Banach
$\ast$-algebra. Its enveloping $C^*$-algebra is the non-commutative
torus $C^*(\theta )$.

To obtain a concrete realization of the non-commutative torus
$C^*(\theta )$, we consider the regular representation $\lambda $  of 
$\ell ^1(\bZ
^n, \theta )$ on $\ell ^2(\bZ )$ defined by
$$
\lambda (f) g = f \, \natural _\theta \, g \qquad \text{ for } f\in \ell
^1(\bZ ^n), g\in \ell ^2(\bZ ^n) \, .
$$
The regular representation $\lambda $ is faithful, and the  closure of
$\lambda (\ell ^1)$ with respect to the operator norm  is a
$C^*$-algebra  $\cC$. By a special case of
Satz~6 in~\cite{leptin68},  $C^*(\theta )$ is isometrically isomorphic to
$\cC $. From now on, we will therefore not distinguish between  the
abstract algebra $C^*(\theta )$ and its  concrete realization  $\cC $.

\section{Inverse-Closed Subalgebras of $C^*(\theta )$}

Next we construct  a family of inverse-closed Banach subalgebras of
the non-commutative torus $C^*(\theta )$. This construction relies on
two important results in Banach algebra theory and  abstract harmonic
analysis.

  First recall that a Banach $*$-algebra $\cA $  is symmetric, if the spectrum
of every positive element is positive, i.e., $\sigma (a^*a)\subseteq
[0,\infty )$ for all $a\in \cA $. The connection between symmetry and
inverse-closedness is folklore and implicit in many proofs of
symmetry~\cite{GL04a,hulanicki,Lep80,Lud79}. The following proposition is
contained in Palmer's book~\cite[Thm.~11.4.1]{palmer2}. (Since the
regular representation of $\eltw $ is faithful, $\eltw $ is
semisimple, and we may quote a
formulation that is  already adapted to  semisimple  Banach
algebras.)

\begin{prop} \label{p2}
A unital semisimple Banach $\ast$-algebra $\cA $  is symmetric,  if 
and only if it is
inverse-closed in its enveloping $C^{\ast}$-algebra.
\end{prop}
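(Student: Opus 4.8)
The plan is to prove the two implications separately, comparing the spectrum $\sigma_{\cA}(a)$ computed in $\cA$ with the spectrum $\sigma_{\cB}(a)$ computed in the enveloping $C^*$-algebra $\cB := C^*(\cA)$. Semisimplicity (here, faithfulness of the regular representation) ensures that the canonical $*$-homomorphism $\cA \to \cB$ is injective, so I may regard $\cA$ as a dense unital $*$-subalgebra of $\cB$; inverse-closedness of $\cA$ in $\cB$ is then equivalent to spectral permanence, $\sigma_{\cA}(a) = \sigma_{\cB}(a)$ for all $a\in\cA$. Throughout I would invoke two classical facts about a closed unital subalgebra $\cA \subseteq \cB$: the inclusion $\sigma_{\cB}(a) \subseteq \sigma_{\cA}(a)$ always holds, and, more precisely, the boundary is trapped, $\partial \sigma_{\cA}(a) \subseteq \sigma_{\cB}(a)$.

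The easy direction is the ``if''. Assuming $\cA$ is inverse-closed, spectral permanence gives $\sigma_{\cA}(a^*a) = \sigma_{\cB}(a^*a)$ for every $a\in\cA$. Since $\cB$ is a $C^*$-algebra and $a^*a$ is positive in $\cB$, its $\cB$-spectrum lies in $[0,\infty)$; hence so does $\sigma_{\cA}(a^*a)$, and $\cA$ is symmetric.

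For the main direction (``only if'') I would first observe that symmetry is exactly the statement $\sigma_{\cA}(a^*a) \subseteq [0,\infty)$, so in particular $\sigma_{\cA}(a^*a)$ is a compact subset of the real line for every $a$. The key point is that a compact subset of $\bR$ has empty interior in $\bC$ and therefore coincides with its own topological boundary; combined with $\partial \sigma_{\cA}(a^*a) \subseteq \sigma_{\cB}(a^*a)$ and the reverse inclusion, this forces $\sigma_{\cA}(a^*a) = \sigma_{\cB}(a^*a)$. Thus no spectral holes can be filled for elements of the form $a^*a$. Now let $a\in\cA$ be invertible in $\cB$. Then $a^*a$ is self-adjoint and invertible in $\cB$, so $0\notin\sigma_{\cB}(a^*a)=\sigma_{\cA}(a^*a)$, i.e.\ $a^*a$ is invertible in $\cA$. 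Setting $b=(a^*a)^{-1}a^*\in\cA$ yields $ba=\Id$, a left inverse of $a$ inside $\cA$; and since $a$ is invertible in $\cB$, any left inverse must coincide with the two-sided inverse, so $a^{-1}=b\in\cA$. This proves inverse-closedness.

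I expect the conceptual crux to be the reduction to positive elements $a^*a$ together with the boundary inclusion, since this is the only place symmetry is genuinely used. It is tempting to try instead to show $\sigma_{\cA}(h)=\sigma_{\cB}(h)$ for all self-adjoint $h$ by first proving $\sigma_{\cA}(h)\subseteq\bR$, but that equivalence (Shirali--Ford) is itself a deep theorem. The advantage of routing everything through $a^*a$ is that the inclusion $\sigma_{\cA}(a^*a)\subseteq[0,\infty)$ is literally the definition of symmetry, so the only analytic input required is the elementary boundary-of-spectrum inclusion; the remaining passage from invertibility of $a^*a$ to invertibility of $a$ via a one-sided inverse is purely algebraic.
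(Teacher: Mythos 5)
Your ``if'' direction is fine: inverse-closedness applied to $a-\lambda\Id$ gives $\sigma_{\cA}(a)=\sigma_{\cB}(a)$, and positivity of $a^*a$ in the $C^*$-algebra $\cB=C^*(\cA)$ then yields symmetry. The gap is in the ``only if'' direction, which is where the entire content of the proposition lies. Your argument rests on the boundary inclusion $\partial\sigma_{\cA}(a^*a)\subseteq\sigma_{\cB}(a^*a)$, which you yourself describe as a classical fact ``about a closed unital subalgebra $\cA\subseteq\cB$''. But $\cA$ is \emph{not} a closed subalgebra of its enveloping $C^*$-algebra: by construction it is dense in $\cB$, and its norm is strictly stronger than the $C^*$-norm (if $\cA$ were closed in the $\cB$-norm, then $\cA=\cB$ and there would be nothing to prove). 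Completeness of $\cA$ in its own norm does not substitute for closedness in $\cB$: the proof of the boundary lemma takes $\lambda_n\to\lambda\in\partial\sigma_{\cA}(a)$ with $(a-\lambda_n)^{-1}\in\cA$ and, assuming $a-\lambda\Id$ invertible in $\cB$, passes to the $\cB$-norm limit of these inverses, needing that limit to stay in $\cA$. For a dense subalgebra the lemma is simply false. Concretely, take $\cA=\ell^1_v(\Z)$ with $v(k)=e^{|k|}$ (a semisimple Banach $*$-algebra, i.e.\ the case $n=1$, $\theta$ trivial, of the paper's weighted algebras with the GRS-condition violated); its enveloping $C^*$-algebra is $\cB=C^*(\Z)=C(\T)$, and for $a=\delta_1$ one has $\sigma_{\cA}(a)=\{z: e^{-1}\le|z|\le e\}$ while $\sigma_{\cB}(a)=\T$, so $\partial\sigma_{\cA}(a)$ (the circles of radius $e^{\pm 1}$) is disjoint from $\sigma_{\cB}(a)$. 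Thus the one step where symmetry is converted into spectral permanence is unjustified, and it cannot be repaired by any soft topological argument.

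What is actually needed at that point --- and what Palmer's Theorem~11.4.1, which the paper cites in place of a proof, supplies --- is Pt\'ak--Raikov theory: in a symmetric Banach $*$-algebra the Pt\'ak function $\rho(a)=r_{\cA}(a^*a)^{1/2}$ is a $C^*$-seminorm and is the greatest one, hence coincides with $\|\cdot\|_{\cB}$ on $\cA$; in particular $r_{\cA}(h)=\|h\|_{\cB}=r_{\cB}(h)$ for self-adjoint $h$. Combining this with the elementary fact that self-adjoint elements of a symmetric algebra have real spectrum, and applying it to the self-adjoint elements $t\,\Id-a^*a$ for large $t>0$, one gets $\min\sigma_{\cA}(a^*a)=\min\sigma_{\cB}(a^*a)$; so if $a$ is invertible in $\cB$, then $a^*a$ is invertible in $\cA$, and your purely algebraic finish $a^{-1}=(a^*a)^{-1}a^*\in\cA$ is correct. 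In short, your skeleton (reduce to $a^*a$, then produce a one-sided inverse) is the standard and correct one, but the spectral identity for $a^*a$ cannot be extracted from the boundary-of-spectrum lemma; it requires the seminorm identity above, which is essentially the substance of the theorem you set out to prove.
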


Our  second ingredient is   a fundamental result of
Ludwig~\cite{Lud79}.

\begin{prop} \label{p3}
   If $G$ is a nilpotent group, then $L^1(G)$ is symmetric.
\end{prop}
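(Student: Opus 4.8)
The plan is to reduce the symmetry of $L^1(G)$ to a single inequality between two spectral radii and then to establish that inequality by induction on the nilpotency class of $G$, peeling off the center at each step. First I would invoke Hulanicki's lemma (applied to the unitization if necessary), which reduces symmetry to a spectral-radius estimate: a Banach $*$-algebra $\cA$ is symmetric as soon as $r_{\cA}(a)\le\|a\|_*$ for every self-adjoint $a\in\cA$, where $\|\cdot\|_*$ denotes the norm of the enveloping $C^*$-algebra; the reverse inequality is automatic, since a $*$-representation cannot increase the spectral radius of a self-adjoint element. Because a nilpotent group is amenable, its full and reduced $C^*$-algebras coincide, so $\|f\|_*=\|\lambda(f)\|_{\mathrm{op}}$, where $\lambda$ is the left regular representation of $G$ on $L^2(G)$. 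Thus the whole statement collapses to
$$ r_{L^1(G)}(f)\;=\;\lim_{n\to\infty}\|f^{\star n}\|_1^{1/n}\;\le\;\|\lambda(f)\|_{\mathrm{op}}\qquad\text{for all } f=f^*\in L^1(G).$$

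Next I would induct on the nilpotency class $c$, using the upper central series $\{e\}=Z_0\subset\cdots\subset Z_c=G$ and writing $Z=Z_1$ for the center. The base case $c\le 1$ is the abelian one: $L^1(G)$ is then commutative, its Gelfand transform is the Fourier transform onto $C_0(\widehat G)$, and $\sigma(f^*\star f)=\{|\widehat f|^2\}\subseteq[0,\infty)$, so $L^1(G)$ is symmetric. For the inductive step I would disintegrate both $\lambda$ and the algebra over the dual $\widehat Z$ of the center. Restricting to a central character $\chi\in\widehat Z$ turns the partial Fourier transform in the central variable into a $*$-homomorphism $f\mapsto f_\chi$ from $L^1(G)$ onto a twisted convolution algebra $L^1(G/Z,\omega_\chi)$ on the quotient, whose nilpotency class is $c-1$. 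A suitably strengthened inductive hypothesis — symmetry for all twisted $L^1$-algebras of nilpotent groups of smaller class, precisely of the type $\ell^1(\Z^n,\theta)$ studied here — then yields the fiberwise equalities $r(f_\chi)=\|\lambda_\chi(f)\|_{\mathrm{op}}$ for almost every $\chi$.

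The crux, and the step I expect to be the main obstacle, is the gluing. One has $\|\lambda(f)\|_{\mathrm{op}}=\sup_\chi\|\lambda_\chi(f)\|_{\mathrm{op}}$ (essential supremum), and since each $f\mapsto f_\chi$ is a homomorphism the inequality $\sup_\chi r(f_\chi)\le r_{L^1(G)}(f)$ is automatic; what is needed is the opposite bound $r_{L^1(G)}(f)\le\sup_\chi r(f_\chi)$. The spectral radius in the global algebra can a priori exceed the supremum of the fiber spectral radii, and $L^1$-norms of convolution powers behave badly under direct-integral decomposition. Overcoming this is exactly where the geometry of nilpotent groups must enter: their polynomial volume growth supplies the quantitative decay estimates (in the spirit of Hulanicki's functional calculus) that let one absorb the loss in the fiberwise $L^1\to C^*$ comparison against the subexponential growth of $G$, thereby closing the induction. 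This growth-sensitive estimate is the genuine analytic content of Ludwig's theorem; the representation-theoretic reduction above is comparatively formal.
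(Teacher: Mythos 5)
The first thing to note is that the paper contains no proof of this proposition at all: it is quoted verbatim as a theorem of Ludwig \cite{Lud79}, and the paper's contribution consists of deriving consequences from it. Your attempt is therefore an attempt to reprove Ludwig's theorem, and as written it has a genuine gap. You reduce everything, correctly in spirit (Hulanicki-type criterion, amenability, passage to the regular representation), to the inequality $r_{L^1(G)}(f)\le \|\lambda(f)\|_{\mathrm{op}}$ for $f=f^*$, and then, after disintegrating over the central characters, to the bound $r_{L^1(G)}(f)\le \sup_\chi r(f_\chi)$. But this bound is exactly the step you defer: you state that the polynomial growth of $G$ ``supplies the quantitative decay estimates'' that close the induction, without formulating or proving any such estimate. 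Since you yourself call this step ``the genuine analytic content of Ludwig's theorem,'' what you have produced is a reduction of the theorem to its hardest part, not a proof of it.

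There is also a structural problem with the induction itself: it is circular as stated. Your strengthened hypothesis asserts symmetry of twisted algebras $L^1(K,\omega_\chi)$ for nilpotent $K$ of class $c-1$. But the standard way to handle a twisted algebra --- the one used in this very paper --- is to pass to the central extension of $K$ by $\T$ determined by the cocycle, and that extension is again nilpotent of class up to $c$, not $c-1$, so the class need not drop. Worse, the base case of your strengthened hypothesis is an abelian group equipped with a cocycle, i.e.\ precisely the algebras $\ell^1(\Z^n,\theta)$ studied here; these are noncommutative, so Gelfand theory does not apply, and their symmetry is exactly what the paper \emph{deduces from} Ludwig's theorem (Theorem~\ref{tm1}). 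The induction therefore never bottoms out in anything you can prove directly. Making a scheme of this kind rigorous requires carrying the twisted algebras with a different induction parameter and a genuine quantitative lemma --- this is what Ludwig does within Leptin's framework of generalized $L^1$-algebras, and what \cite{FGL06} extends to the weighted case --- and that machinery is precisely what is missing from your sketch.
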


By combining the explicit construction of non-commutative tori  with
these results, we obtain a
fundamental inverse-closed subalgebra of the non-commutative torus
$C^*(\theta )$.

\begin{tm} \label{tm1}
   The Banach $*$-algebra $\ell ^1(\bZ ^n, \theta )$ is inverse-closed
   in the non-\-commuta\-tive torus $C^*(\theta )$.
\end{tm}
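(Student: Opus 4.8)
Write $A=\ell^1(\bZ^n,\theta)$ and $B=L^1(G)$. The plan is to verify the hypotheses of Proposition~\ref{p2} and then extract the remaining one, symmetry of $A$, from the symmetry of $B$ provided by Proposition~\ref{p3}. Three ingredients are already in place: $A$ is unital with identity $\delta_0$; $A$ is semisimple, since its regular representation $\lambda$ is faithful (as noted before Proposition~\ref{p2}); and by Lemma~\ref{l1} the map $\circ$ is an isometric $*$-isomorphism of $A$ onto a closed $*$-subalgebra $A^\circ$ of $B$, whose enveloping $C^*$-algebra is $C^*(\theta)$. Since $G$ is nilpotent, Proposition~\ref{p3} shows that $B=L^1(G)$, and hence its unitization $B^+$, is symmetric. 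By Proposition~\ref{p2} it therefore suffices to prove that $A$ is symmetric, i.e.\ that $\sigma_A(a^*a)\subseteq[0,\infty)$ for every $a\in A$, where $\sigma_A$ denotes the spectrum computed in $A$.

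The obstruction to reading this off directly is that $B=L^1(G)$ has no identity, so I cannot compare spectra in $A^\circ$ and $B$ with respect to a common unit. I would remove this by passing to $B^+$ and cutting down by the self-adjoint idempotent $e:=\delta_0^\circ$. Indeed $\delta_0\twist\delta_0=\delta_0$ and $\delta_0^*=\delta_0$, so $e^*=e=e\star e$; moreover $f^\circ=(\delta_0\twist f\twist\delta_0)^\circ=e\star f^\circ\star e$ for every $f$, so that $A^\circ\subseteq eB^+e=:C$ is a closed $*$-subalgebra sharing the identity $e$ of the corner $C$. This corner inherits the relevant spectral data from $B^+$: because $1_{B^+}$ is central, for $x\in C$ and $\lambda\neq 0$ the element $x-\lambda 1_{B^+}$ splits as $(x-\lambda e)$ in $C$ together with $-\lambda(1_{B^+}-e)$ in the complementary corner $(1_{B^+}-e)B^+(1_{B^+}-e)$, the latter being invertible; hence $x-\lambda 1_{B^+}$ is invertible in $B^+$ exactly when $x-\lambda e$ is invertible in $C$, i.e.\ $\sigma_C(x)$ and $\sigma_{B^+}(x)$ have the same nonzero part.

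With this in hand, symmetry of $A$ follows by spectral permanence. Given $a\in A$, set $c:=(a^\circ)^*a^\circ=(a^*a)^\circ\in A^\circ\subseteq C$. Since $B^+$ is symmetric we have $\sigma_{B^+}(c)\subseteq[0,\infty)$, and by the corner identity $\sigma_C(c)\subseteq[0,\infty)$ as well; thus $\sigma_C(c)$ is a compact subset of $[0,\infty)$, whose complement in $\bC$ is connected. For the pair $A^\circ\subseteq C$ of closed subalgebras with common unit $e$ one has $\sigma_C(c)\subseteq\sigma_{A^\circ}(c)$, and $\sigma_{A^\circ}(c)$ is obtained from $\sigma_C(c)$ by adjoining at most bounded components of $\bC\setminus\sigma_C(c)$; as there are none, $\sigma_{A^\circ}(c)=\sigma_C(c)\subseteq[0,\infty)$. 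Transporting this back through the isometric $*$-isomorphism $\circ$ gives $\sigma_A(a^*a)\subseteq[0,\infty)$, so $A$ is symmetric, and Proposition~\ref{p2} then yields that $A$ is inverse-closed in its enveloping $C^*$-algebra $C^*(\theta)$. The delicate point, and the step I expect to demand the most care, is precisely this interplay between the non-unital passage (the corner $C=eB^+e$) and the permanence argument; once the complement of the real, compact set $\sigma_C(c)$ is recognized to be connected, the remainder is a direct assembly of Lemma~\ref{l1} and Propositions~\ref{p2}--\ref{p3}.
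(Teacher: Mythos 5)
Your proof is correct and takes essentially the same route as the paper: Ludwig's theorem (Proposition~\ref{p3}) gives symmetry of $L^1(G)$, Lemma~\ref{l1} embeds $\ell^1(\bZ^n,\theta)$ as a closed $*$-subalgebra of $L^1(G)$, and Palmer's criterion (Proposition~\ref{p2}) converts symmetry into inverse-closedness in the enveloping $C^*$-algebra $C^*(\theta)$. The only difference is that where the paper invokes implicitly (``and thus'') the standard permanence fact that a closed $*$-subalgebra of a symmetric Banach $*$-algebra is symmetric, you prove it in this instance, correctly, via the corner $eB^+e$ of the unitization determined by the idempotent $e=\delta_0^\circ$ together with the hole-filling property of spectra of nested unital subalgebras, noting that the complement of a compact subset of $[0,\infty)$ is connected.
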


\begin{proof}
   By construction, the central extension $G$ of $\bZ ^n$ is
   nilpotent, and consequently $L^1(G)$ is symmetric by Ludwig's
   result. Lemma~\ref{l1} identifies $\ell ^1(\bZ ^n, \theta )$ with a
   closed $*$-subalgebra of $L^1(G)$, and thus $\ell ^1(\bZ ^n, \theta )$
   is also a symmetric Banach $*$-algebra. By Proposition~\ref{p2} this
   means that $\ell ^1(\bZ ^n, \theta )$ is \ic\ in $C^*(\theta )$, as
   claimed.
\end{proof}

\rem\ For even dimension and a special representation of the
generators of $C^*(\theta )$ by phase-space shifts, Theorem~\ref{tm1} 
was  proved
in~\cite{GL04} for the solution of a problem in \tfa . An earlier
result is contained in~\cite{arveson91}.  See also \cite{Lue09} and
\cite[Ch.~13]{book} for the connections to \tfa .

\vspace{ 2 mm}

To generate more examples of \ic\ subalgebras of $C^*(\theta )$, we
introduce weighted $\ell ^1$-algebras.

Let $v$ be a submultiplicative and symmetric weight function on $\bZ
^n$, i.e., $v$ satisfies the conditions
$$
v(x+y)\leq v(x) v(y) \quad \text{ and }\quad v(-x) = v(x) \quad \text{ for
   all } x,y \in \bZ ^n \, ,
$$
and let $\ell ^1_v (\bZ ^n)$ be the corresponding weighted $\ell
^1$-space with norm $\| f\|_{\ell ^1_v} = \|f v \|_1$. The pointwise
inequality $|(f\, \natural _\theta \, g)(x)| \leq (|f| \ast |g|)(x)$
for all $x\in \zn $ shows that $(\ell ^1_v(\zn ), \, \natural _\theta )$  is a Banach algebra, which we denote $\ell ^1_v (\zn ,
\theta )$.  Since $v$ is symmetric, $\ell ^1_v (\zn , \theta )$ is a
$*$-subalgebra of $\ell ^1 (\zn ,\theta )$.

The next proposition characterizes  those submultiplicative
symmetric
weights for which $\ell ^1_v(\zn , \theta )$ is \ic\ in the
non-commutative torus $C^*(\theta )$.

\begin{prop} \label{p:5}
   The Banach algebra $\ell ^1_v(\zn , \theta )$ is \ic\ in $C^*(\theta
   )$, if and only if $v$ satisfies the Gelfand-Raikov-Shilov condition
   (GRS-condition)
$$
\lim _{n\to \infty } v(nx)^{1/n} = 1 \qquad \text{ for all } x\in \zn
\, .
$$
\end{prop}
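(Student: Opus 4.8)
The plan is to treat the two implications separately and, for the harder forward direction, to imitate the strategy of Theorem~\ref{tm1}, replacing the trivial weight by a weight lifted to the extension group $G$.

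For the implication ``GRS $\Rightarrow$ \ic'', I would first lift $v$ to a weight $w$ on $G$ by setting $w(x,\xi) = v(x)$. Since the group law on $G$ twists only the central $\bT$-coordinate, $w$ is again submultiplicative and symmetric, and the GRS-condition for $v$ on $\zn$ translates verbatim into the GRS-condition $\lim_{k\to\infty} w(g^k)^{1/k} = 1$ for every $g\in G$ (because $(x,\xi)^k$ has $\zn$-component $kx$, so $w(g^k) = v(kx)$). The weighted version of the calculation in Lemma~\ref{l1} then shows that $\circ : \ell^1_v(\zn,\theta) \to L^1_w(G)$ is an isometric $*$-homomorphism onto a closed $*$-subalgebra: the homomorphism and involution identities are identical to those in Lemma~\ref{l1}, and only the norm computation changes, giving $\|f^\circ\|_{L^1_w} = \sum_x \int_{\bT} |f(x)\overline{\xi}|\, v(x)\, d\xi = \|f\|_{\ell^1_v}$.

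The single deep analytic input is then the symmetry of $L^1_w(G)$. The group $G$ is a compactly generated nilpotent locally compact group, hence of polynomial growth, so I would invoke the weighted analogue of Ludwig's result (the Fendler--Gr\"ochenig--Leinert theorem): on such a group a submultiplicative weight yields a symmetric $L^1_w(G)$ precisely when it obeys the GRS-condition. Thus $L^1_w(G)$ is symmetric. Symmetry is inherited by closed $*$-subalgebras: for $a$ in a closed $*$-subalgebra $\cA$ of $\cB$, both the spectral radius $\rho(a) = \lim_k \|a^k\|^{1/k}$ and the Pt\'ak quantity $\rho(a^*a)^{1/2}$ are computed from the norm alone and therefore agree in $\cA$ and $\cB$, so the Shirali--Ford--Pt\'ak characterization $\rho(a)\le \rho(a^*a)^{1/2}$ descends from $\cB$ to $\cA$. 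Hence $\ell^1_v(\zn,\theta)$ is symmetric. Finally it is unital and semisimple (the regular representation is faithful), and its enveloping $C^*$-algebra is $C^*(\theta)$, since the finitely supported functions are dense and every $*$-representation is determined by the images of the unitary generators $\delta_{e_j}$ subject to the relations $\theta_{jk}$; Proposition~\ref{p2} then yields \ic ness in $C^*(\theta)$.

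For the converse ``\ic\ $\Rightarrow$ GRS'' I would argue by contraposition using a single generator. If the GRS-condition fails, choose $x_0\in\zn$ with $a := \lim_{k\to\infty} v(kx_0)^{1/k} > 1$ (the limit exists by submultiplicativity and is always $\ge 1$). The element $u = \delta_{x_0}$ is unitary in $C^*(\theta)$, so $\rho_{C^*(\theta)}(u) = 1$. On the other hand the $k$-th power of $u$ in $\ell^1_v(\zn,\theta)$ equals $u^k = c_k\,\delta_{kx_0}$ with $|c_k|=1$, whence $\|u^k\|_{\ell^1_v} = v(kx_0)$ and $\rho_{\ell^1_v}(u) = \lim_k \|u^k\|_{\ell^1_v}^{1/k} = a > 1$. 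Since inverse-closedness forces $\sigma_{\ell^1_v}(u) = \sigma_{C^*(\theta)}(u)$, and hence equal spectral radii, this is a contradiction; therefore \ic ness implies the GRS-condition. The main obstacle is the forward direction, and within it the one nontrivial ingredient is the symmetry of $L^1_w(G)$ for GRS weights, which rests on the polynomial growth of the nilpotent extension group; the lifting of the weight, the heredity of symmetry, and the identification of the enveloping $C^*$-algebra are routine, while the converse is elementary once one observes that the weighted norm of the powers of a single generator computes the spectral radius and detects the failure of GRS.
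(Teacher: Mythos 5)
Your proof is correct and takes essentially the same route as the paper: you lift $v$ to a weight on the extension group $G$, invoke the Fendler--Gr\"ochenig--Leinert weighted version of Ludwig's theorem for the symmetry of $L^1_w(G)$, pass to the closed $*$-subalgebra $\ell^1_v(\zn,\theta)$, apply Palmer's criterion together with the identification of the enveloping $C^*$-algebra, and prove the converse by comparing the spectral radius of the unitary $\delta_{x_0}$ in the two algebras, exactly as in the paper. The only (harmless) differences are that you spell out the heredity of symmetry via the Pt\'ak--Shirali--Ford characterization, which the paper treats as immediate, and you phrase the enveloping $C^*$-algebra identification slightly more compactly than the paper's extension argument $\overline{\pi}(f)=\sum_x f(x)\pi(\delta_x)$.
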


\begin{proof}
   Assume first  that $v$ satisfies the GRS-condition.
Then  we may extend $v$ to a weight on  $G$ by setting $\omega (x, 
\xi ) = v(x)$ for all $x \in
\Z^{n}, \xi \in \T$. The extended weight $\omega $  satisfies the GRS-condition
on $G$, so the weighted version of Ludwig's theorem, as proved in
\cite{FGL06}, Theorems 1.3 and 3.4,   implies that $L^1_\omega (G)$ is
symmetric.  Since obviously $\|f^\circ \|_{L^1_\omega (G)} =
\|f\|_{\ell ^1_v}$, Lemma~\ref{l1} implies that $\ell ^1_v( \zn ,
\theta )$ can be identified with a closed subalgebra of $L^1_\omega
(G)$ and thus is  also symmetric. Consequently, by Proposition~\ref{p2}, 
$\ell _{v}^{1} (\Z^{n}, \theta )$ is \ic\ in its enveloping
$C^*$-algebra. To see that this $C^{\ast}$-algebra is $C^{\ast}(\theta)$, it suffices to note that $\ell_{v}^{1}(\Z^{n}, \theta)$ is dense in $\ell^{1}(\Z^{n}, \theta)$ and every $\ast$-representation $\pi$ of $\ell_{v}^{1}(\Z^{n}, \theta)$ on a Hilbert space can be extended to $\ell^{1}(\Z^{n}, \theta)$. The latter follows from the fact that $\pi$ is completely determined by the $\pi(\delta_{x}), x \in \Z^{n}$, and those operators are unitary, so $\overline{\pi}(f) = \sum_{x \in \Z^{n}} f(x) \pi (\delta_{x})$, $f\in \ell^{1}(\Z^{n})$, is the desired extension of $\pi$ to a $\ast$-representation of $\ell^{1}(\Z^{n}, \theta )$.

Conversely,  assume that $v$ violates the GRS-condition. This means that there
exists an $x\in \zn $, such that  $\lim _{n\to \infty } v(nx)^{1/n}
>1$. Since by \eqref{eq:cx} the $n$-th power of $\delta _x$ is of the
form $c_n \delta_{nx}$ with $|c_n|=1$, the spectral radius of $\delta
_x$ in $\ell_{v}^{1} (\Z^{n}, \theta )$ is
$$
r_{\ell_{v}^{1} (\Z^{n}, \theta )}(\delta _x) = \lim _{n\to \infty } 
\|c_n \delta
_{nx}\|_{\ell ^1_v(\zn , \theta )} ^{1/n} = \lim _{n\to \infty }
v(nx)^{1/n} >1 \, .
$$
On the other hand,  since $\delta _x$ is unitary in $\ell_{v}^{1} (\Z^{n}, \theta )$, it is also unitary  in  $C^*(\theta
)$. Consequently, the spectral radius of $\delta _x$ in $C^*(\theta )$
is $1$. Therefore the spectrum of $\delta _x$ in $\ell ^1_v (\zn ,
\theta)$ cannot be equal to the spectrum of $\delta _x $ in
$C^*(\theta )$, and so $\ell ^1_v (\zn ,
\theta)$ is not \ic\ in $C^*(\theta )$.
\end{proof}

\rem\ A  nonspectral subalgebra of the irrational rotation algebra
(the non-commutative torus with two generators) and its simplicity
were  first discussed by  Schweitzer~\cite{Sch94}.

Proposition~\ref{p:5} provides an abundance of examples of \ic\ Banach
subalgebras of a non-commutative torus in higher dimensions. By
taking intersections of weighted $\ell ^1$-algebras, one may now
construct \ic\ Fr\'echet subalgebras of $C^*(\theta )$. In particular,
fix $v(x) = 1+|x|$ for some norm $|\cdot |$ on $\zn $  and set
\begin{equation}
   \label{eq:c4}
   \mathcal{S}(\zn , \theta ) = \bigcap _{s\geq 0} \ell ^1_{v^s} (\zn , \theta ) = \{
   f \in \ell ^1(\zn ): |f(x)| = \cO ( |x|^{-s}) \, \forall s\geq 0\}
   \, .
\end{equation}
Then $\mathcal{S}(\zn , \theta )$ consists of all rapidly decreasing sequences
and coincides with  the usual smooth non-commutative torus.  Since an arbitrary
intersection of \ic\ subalgebras is again \ic , $  \mathcal{S}(\zn , \theta )$
is an \ic\ Fr\'echet algebra  of the non-commutative torus  $C^*(\theta
)$. This result goes back to Connes~\cite{connes80}.

Proposition~\ref{p:5} yields  \ic\ subalgebras of $C^*(\theta )$ that
are even smaller than $  \mathcal{S}(\zn , \theta )$. For this fix a
subexponential weight $v(x) = e^{a |x|^b}$ with  $a>0$ and $0<b<1$. Then
$v$ satisfies the GRS-condition, and thus $\ell ^1_v(\zn , \theta )$
is \ic\ in $C^*(\theta )$. On the other hand, $\ell ^1_v(\zn , \theta
)$ is a Banach subalgebra of the smooth non-commutative torus $\mathcal{S}(\zn ,
\theta )$. In the language of non-commutative geometry, one might say
that $\ell ^1_v(\zn , \theta )$ consists of ``ultra-smooth'' elements
of $C^*(\theta )$.

\section{Simplicity } 

  The construction of \ic\ subalgebras of non-commutative tori is
completely independent of the fine structure of these tori. In
particular, the simplicity of $\eltw  $ is not related to its
spectral properties.

In this section we treat the question when the  twisted
$\ell ^1$-algebra  $\ell ^1  (\zn , \theta )$ is  simple. Making use
of  the symmetry of $\ell ^1(\zn , \theta )$, one can derive 
 Theorem~\ref{simp} below from the characterization of the
simplicity of higher-dimensional non-commutative tori $C^*(\theta
)$ in~\cite{phil06}, but one has to  go back to \cite{sla72}
and \cite{ell80} for its proof. We offer a simplified  proof that
works directly for $\ell ^1 (\zn , \theta )$, from which  the known
result about $C^*(\theta )$ follows. Our proof for the twisted $\ell
^1$-algebras is fairly elementary, but its idea is probably old.

Let $\delta _m, m\in \mathbb{Z}^n,$ denote the ``Dirac'' functions on
$\bZ ^n$  and
$e_j, j=1, \dots , n,$ the standard basis of $\mathbb{Z}^n$. Then
$\delta _m$ is central in $\ell ^1 (\mathbb{Z}^n, \natural, \ast )$,
\fif\ $\delta _m \, \natural \, \delta _{e_j} = \delta _{e_j} \,
\natural \,  \delta _m  $ for $j=1, \dots ,n$. Since
$$
  \delta _m \, \natural \, \delta _{e_j}  = \sigma (m, e_j) \delta
  _{m+e_j} 
=  \theta _{n,j} ^{m_n} \dots \theta _{j+1, j} ^{m_{j+1} }\, \delta
_{m+e_j}
$$
and 
$$
\delta _{e_j} \, \natural \, \delta _m = \sigma  (e_j, m) \delta
_{m+e_j} = \theta _{j,1}^{m_1} \dots \theta _{j,j-1} ^{m_{j-1}} \, \delta
  _{m+e_j} \, ,
$$
the following conditions are equivalent: 
\begin{enumerate}
\item[(i)] $\delta _m$ is central in $\eltw $.
\item[(ii)] $\sigma (m,e_j) = \sigma (e_j, m)$ for $j=1,\dots ,n$.
\item [(iii)] $\prod _{j=1}^n \theta _{jk}^{m_j} = 1$ for $k=1,\dots ,n$.
\end{enumerate}
If $\vartheta = (\vartheta _{jk})$ is a (non-unique) skew-symmetric
real matrix with $e^{2\pi i \vartheta _{jk}}  = \theta _{jk}$, then
(iii) means that $\sum_{j=1}^n m_j \vartheta _{jk} \in \mathbb{Z}$
for $k=1,\dots , n$. So, denoting the skew-symmetric bilinear form
$(m,l)\to \vartheta (l,m) = m^T \vartheta l$ by $\vartheta $ again, a  fourth
equivalent property is

\begin{enumerate}
\item[(iv)]  $\vartheta (l,m) \in \mathbb{Z} \quad
\text{ for all } l\in \mathbb{Z}^n \, .$
\end{enumerate}

\begin{definition}
  A cocycle $\sigma $ is called degenerate, if there exists a
  \textrm{non-zero} $m\in   \mathbb{Z}^n$ satisfying one of the
  equivalent  conditions (i) --- (iv). Otherwise $\sigma $ is called
  nondegenerate. 
\end{definition}
We note that $\sigma $ can be nondegenerate even if $\vartheta$ is
degenerate in the sense of linear algebra. 

\rem\ It is well known that  a unital Banach algebra $\cA $ with
non-trivial center  is not simple. For if the center $\cZ $ is non-trivial,
i.e., its dimension is at least two, then it contains an
element $a $ that is not invertible in $\cZ $  by the theorem of
Gelfand-Mazur. Since $\cZ $ is inverse-closed in $\cA $, $a$ is not
invertible in $\cA $. Consequently the
generated ideal $a\cA = \cA a$ is a proper two-sided ideal, and so is its
closure $\overline{a\cA}$. Thus $\cA $ is not simple.

The following theorem characterizes the simplicity of  twisted
$\ell ^1$-algebras.

\begin{tm} \label{simp}
Let $v$ be an arbitrary, submultiplicative weight function on $\bZ
^n$($v$ need not satisfy the GRS-condition).
 
Then the  algebra $\ell ^1_v(\zn , \theta )$ is simple, \fif\ the
 cocycle $\sigma  $ is nondegenerate.
\end{tm}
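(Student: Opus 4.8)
The plan is to prove the two implications separately, with essentially all the work in the converse. For the ``only if'' part it is cleanest to argue the contrapositive: if $\sigma$ is degenerate, then by condition (i) there is a non-zero $m\in\zn$ with $\delta_m$ central in $\ell^1_v(\zn,\theta)$. Since $\delta_0$ and $\delta_m$ are linearly independent, the center has dimension at least two, and the Remark immediately preceding the theorem shows that a unital Banach algebra with non-trivial center is not simple. This settles one direction.

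For the ``if'' part, assume $\sigma$ is nondegenerate, let $I$ be a non-zero closed two-sided ideal, and aim to show $\delta_0\in I$, which forces $I$ to be the whole algebra. The mechanism is conjugation by the unitaries $\delta_y$. A direct computation from $\delta_y\natural\delta_m = \sigma(y,m)\delta_{y+m}$ and $\delta_m\natural\delta_y=\sigma(m,y)\delta_{y+m}$ gives $\delta_y\natural\delta_m\natural\delta_y^\ast=\rho_y(m)\,\delta_m$ with $\rho_y(m)=\sigma(y,m)\overline{\sigma(m,y)}$ and $|\rho_y(m)|=1$; moreover $y\mapsto\rho_y(m)$ is a character of $\zn$. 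By the equivalences (i)--(iv), this character is trivial precisely when $m$ is degenerate, so nondegeneracy says exactly that $\rho_\bullet(m)$ is a non-trivial character of $\zn$ for every $m\neq 0$.

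First I would introduce the averaging operator $A_N(a)=|F_N|^{-1}\sum_{y\in F_N}\delta_y\natural a\natural\delta_y^\ast$ over the cubes $F_N=\{0,1,\dots,N-1\}^n$. Because conjugation multiplies each coefficient $a(m)$ only by the unimodular scalar $\rho_y(m)$, every conjugate has the same $\ell^1_v$-norm as $a$, so $A_N$ is well defined and $A_N(a)=\sum_m a(m)\,c_N(m)\,\delta_m$, where
\[
c_N(m)=\frac1{|F_N|}\sum_{y\in F_N}\rho_y(m)=\prod_{j=1}^n\frac1N\sum_{k=0}^{N-1}\rho_{e_j}(m)^{k}.
\]
One has $c_N(0)=1$ for all $N$, while for $m\neq 0$ at least one factor has $\rho_{e_j}(m)\neq 1$ and hence tends to $0$, the remaining factors being bounded by $1$; thus $c_N(m)\to 0$. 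Since $|c_N(m)|\leq 1$ and $\sum_m|a(m)|v(m)<\infty$, dominated convergence gives $\|A_N(a)-a(0)\delta_0\|_{\ell^1_v}\to 0$. As each $A_N(a)$ lies in $I$ and $I$ is closed, we obtain $a(0)\delta_0\in I$.

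Finally one removes the hypothesis $a(0)\neq 0$: choosing $m_0$ with $a(m_0)\neq 0$ and passing to $b=\delta_{-m_0}\natural a\in I$, a one-line computation gives $b(0)=a(m_0)\sigma(-m_0,m_0)\neq 0$, so the averaging step applied to $b$ produces $\delta_0\in I$ and hence $I=\ell^1_v(\zn,\theta)$. The crux of the argument---and the sole point where nondegeneracy is used---is the vanishing of the averages $c_N(m)$ for $m\neq 0$, which is precisely the statement that each conjugation character $\rho_\bullet(m)$ is non-trivial. The presence of the weight $v$ causes no extra trouble, since conjugation is $\ell^1_v$-isometric and the dominating function $|a(\cdot)|v(\cdot)$ is summable for every submultiplicative $v$.
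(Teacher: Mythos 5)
Your proof is correct and is essentially the paper's own argument: both proofs handle the degenerate direction via the non-trivial-center remark, and for the nondegenerate direction both conjugate an element of the ideal by the unitaries $\delta_y$, note that this multiplies each coefficient by a unimodular character value, and then Ces\`aro-average so that dominated convergence in $\ell^1_v$ annihilates every non-central coefficient, leaving $a(0)\delta_0$ in the ideal. The only difference is cosmetic: the paper averages over powers of each generator $\delta_{e_j}$ separately, projecting successively onto the centralizers $C_j$ and then intersecting, whereas you perform a single average over the cubes $F_N$, which factors into exactly the same one-dimensional geometric means.
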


\begin{proof}

If $\sigma $ is degenerate, then $\ell ^1_v(\zn , \theta )$ has a
non-trivial center and is not simple by the  remark above. 

Now suppose that $\sigma  $ is nondegenerate. For each $j \in \{ 1,
\dots , n\}$ the element  $\delta _{e_j}$ is
unitary, and its adjoint is $\sigma (-e_j, e_j) \delta _{-e_{j}}$. For arbitrary $x\in \zn $
and $k\in \bN $ we have
$$
(\delta _{e_j} ^*)^k \, \natural \, \delta _x \, \natural \, \delta _{e_j} ^k
= \beta _x ^k \delta _x
$$
for some $\beta _x \in \bT $. More precisely, $\beta _x = 1$, \fif\
$\delta _x$ commutes with $\delta _{e_j}$. We denote the ''centralizer'' of
$\delta _{e_j}$ by
$$
C_j = \{ y\in \zn : \delta _y \, \natural \, \delta _{e_j} = \delta _{e_j} \,
\natural \, \delta _y \} \, .
$$

  Now let $I$ be a (closed) two-sided
ideal of $\ell ^1_v(\zn , \theta  )$ and $f= \sum _{x\in \zn } \alpha _x \delta _x \in I
\subseteq \ell ^1_v(\zn , \theta ) $. We consider the behavior of the averages
$$
J_m (f) = \frac{1}{m} \sum _{k=1}^m (\delta _{e_j}^*)^k \twist f \twist
\delta _{e_j}^k = \sum _{x\in \zn }  \alpha _x \Big( \frac{1}{m} \sum
_{k=1}^m \beta _x ^k \Big) \delta _x \, .
$$
If $x \in C_j$, then  $\frac{1}{m} \sum
_{k=1}^m \beta _x ^k = 1$; if $x \not \in C_j$, then the average 
$\frac{1}{m} \sum
_{k=1}^m \beta _x ^k  $ converges to zero for $m\to \infty $.
Using dominated convergence, we conclude that
$$
\lim _{m\to \infty } J_m (f) = \sum _{x\in C_j} \alpha _x \delta _x = 
f \, \chi _{C_j}
$$
with convergence in the  $\ell ^1_v$-norm.  For $f\in I $, this means
that also $f \chi _{C_j} \in I$. Since this is true for all $j=1,
\dots ,n$, we obtain that $f (\prod _{j=1}^n \chi _{C_j}) = f \chi
_{\bigcap _{j=1}^n C_j } \in I$.

Since $\sigma  $ is nondegenerate, we must have $  \bigcap _{j=1}^n C_j  =
\{ 0 \}$ and thus $f(0) \delta _0 \in I$. Either $I = \ell ^1_v(\zn , \theta ) $ or $I $
is a proper ideal and $f(0)=0$. By applying the argument to
  $\delta _x \twist f\in I$ for every  $x\in \zn $, we obtain that $ (\delta
  _x \twist f)(0) =  \sigma  (x,-x) \, f(-x) = 0$, so $f(x) = 0$  for
  all $x\in \zn $. Consequently  either $I= \ell ^1_v(\zn , \theta  )$
  or $I = \{0\}$,  and thus 
$  \ell ^1_v(\zn , \theta  )$ is simple.
\end{proof}

\rem\ We may also  obtain an alternative proof of the well-known
$C^*$-analogue of Theorem~\ref{simp}.
The above proof also  works  for $C^*(\theta )$,  because
the finitely supported functions are dense in $C^*(\theta )$ and the
inner automorphisms are also isometric in the $C^*(\theta )$-norm.


\def\cprime{$'$} \def\cprime{$'$} \def\cprime{$'$} \def\cprime{$'$}
   \def\cprime{$'$}


  \bibliographystyle{abbrv}
  \bibliography{general,new}

\begin{thebibliography}{10}

\bibitem{arveson91}
W.~Arveson.
\newblock Discretized {CCR} algebras.
\newblock {\em J. Operator Theory}, 26(2):225--239, 1991.

\bibitem{Bad98}
C.~Badea.
\newblock The stable rank of topological algebras and a problem of {R}. {G}.
   {S}wan.
\newblock {\em J. Funct. Anal.}, 160(1):42--78, 1998.

\bibitem{connes80}
A.~Connes.
\newblock {$C\sp{\ast} $} alg\`ebres et g\'eom\'etrie diff\'erentielle.
\newblock {\em C. R. Acad. Sci. Paris S\'er. A-B}, 290(13):A599--A604, 1980.

\bibitem{ell80}
G.~A. Elliott.
\newblock On the {$K$}-theory of the {$C^{\ast} $}-algebra generated by a
   projective representation of a torsion-free discrete abelian group.
\newblock In {\em Operator algebras and group representations, {V}ol. {I}
   ({N}eptun, 1980)}, volume~17 of {\em Monogr. Stud. Math.}, pages 157--184.
   Pitman, Boston, MA, 1984.

\bibitem{FGL06}
G.~Fendler, K.~Gr{\"o}chenig, and M.~Leinert.
\newblock Symmetry of weighted {$L\sp 1$}-algebras and the {GRS}-condition.
\newblock {\em Bull. London Math. Soc.}, 38(4):625--635, 2006.

\bibitem{book}
K.~Gr{\"o}chenig.
\newblock {\em Foundations of time-frequency analysis}.
\newblock Birkh\"auser Boston Inc., Boston, MA, 2001.

\bibitem{Gr10}
K.~Gr\"ochenig.
\newblock Wiener's lemma: Theme and variations. An introduction to spectral
   invariance.
\newblock In B.~Forster and P.~Massopust, editors, {\em Four Short Courses on
   Harmonic Analysis}, Appl. Num. Harm. Anal. Birkh\"auser, Boston, 2010.

\bibitem{GL04}
K.~Gr{\"o}chenig and M.~Leinert.
\newblock Wiener's lemma for twisted convolution and {G}abor frames.
\newblock {\em J. Amer. Math. Soc.}, 17:1--18, 2004.

\bibitem{GL04a}
K.~Gr{\"o}chenig and M.~Leinert.
\newblock Symmetry and inverse-closedness of matrix algebras and functional
   calculus for infinite matrices.
\newblock {\em Trans. Amer. Math. Soc.}, 358(6):2695--2711 (electronic), 2006.

\bibitem{hulanicki}
A.~Hulanicki.
\newblock On the spectrum of convolution operators on groups with polynomial
   growth.
\newblock {\em Invent. Math.}, 17:135--142, 1972.


\bibitem{leptin68}
H.~Leptin.
\newblock Darstellungen verallgemeinerter {$L^{1}$}-{A}lgebren.
\newblock {\em Invent. Math.}, 5:192--215, 1968.

\bibitem{Lep80}
H.~Leptin.
\newblock The structure of {$L\sp{1}(G)$} for locally compact groups.
\newblock In {\em Operator algebras and group representations, Vol. II (Neptun,
   1980)}, volume~18 of {\em Monogr. Stud. Math.}, pages 48--61. Pitman, Boston,
   MA, 1984.

\bibitem{Lud79}
J.~Ludwig.
\newblock A class of symmetric and a class of {W}iener group algebras.
\newblock {\em J. Funct. Anal.}, 31(2):187--194, 1979.

\bibitem{Lue09}
F.~Luef.
\newblock Projective modules over noncommutative tori are multi-window {G}abor
   frames for modulation spaces.
\newblock {\em J. Funct. Anal.}, 257(6):1921--1946, 2009.

\bibitem{mackey58}
G.~W. Mackey.
\newblock Unitary representations of group extensions. {I}.
\newblock {\em Acta Math.}, 99:265--311, 1958.

\bibitem{palmer2}
T.~W. Palmer.
\newblock {\em Banach algebras and the general theory of {$*$}-algebras. {V}ol.
   2}, volume~79 of {\em Encyclopedia of Mathematics and its Applications}.
\newblock Cambridge University Press, Cambridge, 2001.
\newblock $*$-algebras.

\bibitem{phil06}
N.~C. Phillips.
\newblock Every simple higher dimensional noncommutative torus is an AT
   algebra.
\newblock  2006,
\newblock arXiv:math/0609783v1.

\bibitem{rieffel88}
M.~A. Rieffel.
\newblock Projective modules over higher-dimensional noncommutative tori.
\newblock {\em Canad. J. Math.}, 40(2):257--338, 1988.

\bibitem{sch26}
O.~Schreier.
\newblock \"{U}ber die {E}rweiterung von {G}ruppen {I}.
\newblock {\em Monatsh. Math. Phys.}, 34(1):165--180, 1926.

\bibitem{Sch94}
L.~B. Schweitzer.
\newblock A nonspectral dense {B}anach subalgebra of the irrational rotation
   algebra.
\newblock {\em Proc. Amer. Math. Soc.}, 120(3):811--813, 1994.

\bibitem{sla72}
J.~Slawny.
\newblock On factor representations and the {$C^{\ast} $}-algebra of canonical
   commutation relations.
\newblock {\em Comm. Math. Phys.}, 24:151--170, 1972.

\end{thebibliography}

\end{document}